 \theoremstyle{plain}
\newtheorem{theorem}{Theorem}[section]
\newtheorem{lemma}[theorem]{Lemma}
\newtheorem{proposition}[theorem]{Proposition}
\newtheorem{definition}[theorem]{Definition}
\newtheorem{remark}[theorem]{Remark}
\newtheorem{example}[theorem]{Example}
\newtheorem{corollary}[theorem]{Corollary}
\newtheorem{condition}{Condition}
\newcommand{\ep}{\mathbb{E}}
\newcommand{\pr}{\mathbb{P}}
\newcommand{\re}{\mathbb{R}}
\newcommand{\MM}{{\mathcal M}}
\newcommand{\ol}[1]{\overline{#1}}
\title{Information-theoretic convergence of extreme values \\ to the Gumbel distribution}
\author{Oliver Johnson\thanks{School of Mathematics, University of Bristol, Fry Building,
Woodland Road, Bristol, BS8 1UG, UK
E-mail: {\tt O.Johnson@bristol.ac.uk}}}
\date{\today}
\begin{document}

\maketitle

\begin{abstract} \noindent
We show how convergence to the Gumbel distribution in an extreme value setting can be understood in an information-theoretic sense. We  introduce a new type of score function which behaves well under the maximum operation, and which implies simple expressions for entropy and relative entropy. We show that, assuming certain properties of the von Mises representation, convergence to the Gumbel can be proved in the strong sense of relative entropy.
\end{abstract}

Keywords: entropy; extreme value theory; information theory; von Mises representation

\section{Introduction and notation}

It is well-known that convergence to the Gaussian distribution in the Central Limit Theorem regime can be understood in an information-theoretic sense, following the work of Stam \cite{stam}, Blachman \cite{blachman}, Brown \cite{brown}, and in particular Barron \cite{barron} who proved convergence in relative entropy (see \cite{johnson14} for an overview of this work). While a traditional characteristic function proof of the Central Limit Theorem may not give a particular insight into why the Gaussian is the limit, this information-theoretic argument (which can be understood to relate to Stein's method \cite{stein2}) offers an insight on this.

To be specific, we can understand this convergence through the (Fisher) score function with respect to location parameter 
$\rho_X(x) = f_X'(x)/f_X(x) = \left( \log f_X(x) \right)'$ of a random variable $X$ with density $f_X$, where $'$ represents the spatial derivative. Two key observations are (i) that a standard Gaussian random variable $Z$ is characterized by having linear score $\rho_Z(x) = - x$ and (ii) there is a closed form expression for the score of the sum of independent random variables as a conditional expectation (projection) of the scores of the individual summands (see e.g. \cite{blachman}). As a result of this, the score function becomes `more linear' in the Central Limit Theorem regime (see \cite{johnson5, johnson50}). Similar arguments can be used to understand `law of small numbers' convergence to the Poisson distribution \cite{johnson11}.

However, there exist other kinds of probabilistic limit theorems which we would like to understand in a similar framework. In this paper we will consider a standard extreme value theory setup \cite{resnick2}: that is, we take i.i.d. random variables $X_1, X_2, \ldots \sim X$ and define $M_n = \max( X_1, \ldots, X_n)$ and $N_n = (M_n - b_n)/a_n$ for some normalizing sequences  $a_n$ and $b_n$. We want to consider whether $N_n$ converges in relative entropy to a standard extreme value distribution. This type of extreme value analysis naturally arises in a variety of contexts including the modelling of natural hazards, world record sporting performances and applications in finance and insurance.

In this paper we show how to prove convergence in relative entropy for the case of a Gumbel (Type I Extreme Value) limit, by introducing a different type of score function, which we refer to as the max-score $\Theta_X$, and which is designed for this problem. Corresponding properties to those described above hold for this
new quantity: (i) a Gumbel random variable $X$ can be characterized by having linear max-score $\Theta_X$  (see Example \ref{ex:gumbel}) (ii) there is a closed form expression (Lemma \ref{lem:scoreval}) for the max-score of the maximum of independent random variables.

In Section \ref{sec:scoreent} we show that the entropy and relative entropy can be expressed in terms of the max-score, 
in Section \ref{sec:exmaxscore} we show how to calculate the expected value of the max-score in the maximum regime,
and in Section \ref{sec:vonmises} we relate this to the standard von Mises representation (see \cite[Chapter 1]{resnick2}) to deduce convergence in relative entropy in Theorem \ref{thm:main}. Our aim is not to provide a larger class of random variables than papers such as \cite{ dehaan2, dehaan, pickands} for which convergence to the Gumbel takes place, but rather to use ideas from information theory to understand why this convergence may be seen as natural, and to prove convergence in a strong (relative entropy) sense. So while for example it is known that the standardized maximum converges in total variation (see for example \cite[p.159]{reiss}), by Pinsker's inequality we know that convergence in relative entropy is stronger.

We briefly remark that entropy was studied in the Gumbel convergence regime by Saeb \cite{saeb}, though using a direct computation based on the density. The standard Fisher score was used in an extreme value context by Bartholm\'{e} and Swan \cite{bartholme} in a version of Stein's method. Extreme value distributions were considered in the context of Tsallis entropy by Bercher and Vignat \cite{bercher}. However, this particular max-score framework is new, to the best of our knowledge.

\begin{definition}
For absolutely continuous random variable $Z \in \re$ we write distribution function $F_Z(z) = \pr(Z \leq z)$, tail distribution function $\ol{F}_Z(z) = \pr(Z > z) = 1-  F_Z(z)$, density $f_Z(z)=F_Z'(z)$ and harazrd function $h_Z(z) = f_Z(z)/ol{F}_Z(z)$. We  define the max-score function:
\begin{eqnarray} \Theta_Z(z) & := & \log( f_Z(z)/F_Z(z)),\label{eq:scoredef} \\
& = & \log h_Z(z) + \log(1-F_Z(z)) - \log F_Z(z), \label{eq:vonmisesscore}
\end{eqnarray}
where the second result follows on rearranging. 
\end{definition}

Note that we can write $F_X(x) = \exp(-w(x))$ for some decreasing function $w$ with $w(\infty) = 0$. Then $f_X(x)  
 = - w'(x) F_X(x)$ so that $\Theta_X(x) = \log( - w'(x))$.
Equivalently, inverting this relationship gives  \begin{equation} F_X(x) = \exp \left(  - \int_x^\infty e^{\Theta_X(u)} du \right).
\label{eq:invert}
\end{equation}
We now remark that under this definition the Gumbel distribution has linear max-score function:

\begin{example} \label{ex:gumbel}
A Gumbel random variable $Y$ with parameters $\mu$ and $\beta$ has distribution function $F_Y(y) = \exp \left( - e^{-(y- \mu)/\beta} \right)$, so in the notation above
$w(y) = e^{-(y-\mu)/\beta}$. Hence $w'(y) = -e^{-(y-\mu)/\beta}/\beta$ and a Gumbel random variable has max-score 
$$\Theta_Y(y) = - \log \beta - \frac{(y - \mu)}{\beta}. $$
Indeed, using \eqref{eq:invert}, we can see the property of having a linear max-score $\Theta_Y$ characterizes the Gumbel.

For future reference in this paper, note that (see \cite[Eq. (1.25)]{kotz}) the  $\ep Y = \mu+ \beta \gamma$, where $\gamma$ is the Euler--Mascheroni constant, and $\MM_Y(t) =  e^{\mu t} \Gamma(1-\beta t )$ (see \cite[Eq. (1.23)]{kotz}).
\end{example}

We can further state how the max-score function behaves under the maximum and rescaling operations:

\begin{lemma} \label{lem:scoreval} If we write $M_n = \max( X_1, \ldots, X_n)$ and $N_n = (M_n - b_n)/a_n$ then
\begin{eqnarray} 
\Theta_{N_n}(z)  & = & \log(n a_n) + \Theta_X(a_n z + b_n),  \label{eq:scoreval} \\
\Theta_{N_n}(N_n)  & = & \log(n a_n) + \Theta_X(M_n),  \label{eq:scoreval2}
\end{eqnarray}
\end{lemma}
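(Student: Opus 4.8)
The plan is to compute the distribution function and density of $N_n$ explicitly and then read off the max-score directly from the definition \eqref{eq:scoredef}, so that the whole statement reduces to a short algebraic identity. First I would exploit the independence of $X_1, \ldots, X_n$ to write the distribution function of the maximum as $F_{M_n}(m) = \pr(X_1 \leq m, \ldots, X_n \leq m) = F_X(m)^n$, and then differentiate to obtain the density $f_{M_n}(m) = n F_X(m)^{n-1} f_X(m)$.

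Next, passing to $N_n = (M_n - b_n)/a_n$ with $a_n > 0$, the elementary change of variables gives $F_{N_n}(z) = \pr(M_n \leq a_n z + b_n) = F_X(a_n z + b_n)^n$, and differentiating through the chain rule yields $f_{N_n}(z) = n a_n F_X(a_n z + b_n)^{n-1} f_X(a_n z + b_n)$. Substituting these into $\Theta_{N_n}(z) = \log\bigl( f_{N_n}(z)/F_{N_n}(z) \bigr)$, the factor $F_X(a_n z + b_n)^{n-1}$ cancels against $F_X(a_n z + b_n)^n$ to leave a single power in the denominator, so that
\[
\Theta_{N_n}(z) = \log(n a_n) + \log\left( \frac{f_X(a_n z + b_n)}{F_X(a_n z + b_n)} \right) = \log(n a_n) + \Theta_X(a_n z + b_n),
\]
which is \eqref{eq:scoreval}. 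Finally, I would obtain \eqref{eq:scoreval2} simply by evaluating this identity at the (random) point $z = N_n$ and using the relation $a_n N_n + b_n = M_n$.

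The computation is essentially routine, so there is no substantial obstacle; the only points deserving care are the positivity of $a_n$ (standard in this setup, and needed so that the maximum operation is genuinely preserved under the affine rescaling), and the clean cancellation of the power of $F_X$. In fact this cancellation is precisely what makes the max-score the natural object for the problem: whereas the ordinary density $f_{M_n}$ of the maximum carries an awkward $(n-1)$-th power of $F_X$, the ratio $f/F$ in \eqref{eq:scoredef} collapses this entirely, leaving the additive decomposition into a deterministic term $\log(n a_n)$ and the parent max-score $\Theta_X$ evaluated at the rescaled argument. This additive structure is the analogue, for the maximum operation, of the convolution identity enjoyed by the Fisher score under summation, and it is what will later permit the relative-entropy argument.
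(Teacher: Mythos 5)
Your proposal is correct and follows essentially the same route as the paper: both compute $F_{N_n}(z) = F_X(a_n z + b_n)^n$ via independence, differentiate to get $f_{N_n}(z) = n a_n F_X(a_n z + b_n)^{n-1} f_X(a_n z + b_n)$, exploit the cancellation of powers of $F_X$ in the ratio $f_{N_n}/F_{N_n}$, and obtain \eqref{eq:scoreval2} by substituting $z = N_n$ with $M_n = a_n N_n + b_n$. The only cosmetic difference is that you pass through $f_{M_n}$ explicitly before rescaling, whereas the paper differentiates the rescaled distribution function directly.
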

\begin{proof}
As usual (see for example \cite[Chapter 0.3]{resnick2}), we know that by independence
\begin{equation} \label{eq:mnpower}
F_{M_n}(x) = \pr \left( \max( X_1, \ldots, X_n) \leq x \right) = \pr \left( \bigcap_{i=1}^n \{ X_i \leq x \} \right) = F_X(x)^n,
\end{equation}
 so that $F_{N_n}(x) = F_{M_n}(a_n x + b_n) = F_{X}(a_n x + b_n)^n$.
This means that $f_{N_n}(x) = n a_n F_X(a_n x + b_n)^{n-1} f_X(a_n x + b_n)$,
so $f_{N_n}(x)/F_{N_n}(x) = n a_n f_X(a_n x + b_n)/F_X(a_n x + b_n)$
 and \eqref{eq:scoreval} follows on taking logarithms. The second result, \eqref{eq:scoreval2}, follows by direct substitution using the fact that $M_n = a_n N_n + b_n$.
\end{proof}

\begin{example}
In particular, if $X$ is exponential with parameter $\lambda$, so with $a_n = 1/\lambda$ and $b_n = \log n/\lambda$
$$ \Theta_X(z) = \log \left( \frac{ \lambda e^{-\lambda z} }{1 - e^{-\lambda z}} \right),$$
this gives
$$ \Theta_{N_n}(z) = \log (n/\lambda) + \log \left( \frac{ \lambda e^{-z}/n}{1 - e^{-z}/n} \right)
 =  - z - \log(1-e^{-z}/n) .$$
Hence, letting $n \rightarrow \infty$, we know that $\Theta_{N_n}(z)$ converges pointwise to $-z$, which is the max-score of the standard Gumbel (with parameters $\mu = 0$ and $\beta =  1$) -- see Example \ref{ex:gumbel} above. \end{example}

However, while this gives us some intuition as to why the Gumbel is the limit in this case, pointwise convergence of the score function does not seem a particularly strong sense of convergence. We now discuss the question of convergence in relative entropy.

\section{Max-score function and entropy} \label{sec:scoreent}

We next show that we can use the max-score function to give an alternative formulation for the entropy of a random variable, which allows us to quickly find the entropy of a Gumbel distribution. We first state a simple lemma, which follows directly from the fact that both $F_X(X)$ and $1-F_X(X)$ are uniformly distributed:
\begin{lemma} \label{lem:changevar}
 For any continuous random variable $X$ with distribution function $F_X$:
$$ \ep \log F_X(X)  = \ep \log \left( 1- F_X(X) \right)  = -1.$$
\end{lemma}
%

\begin{proposition} \label{prop:entval}
For an absolutely continuous random variable $X$ with max-score function $\Theta_X$, the entropy $H(X)$ satisfies
\begin{equation} H(X) = 1 - \ep \Theta_X(X). \label{eq:keyrep} \end{equation}
\end{proposition}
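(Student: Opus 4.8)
The plan is to reduce the claim directly to the definition of the max-score and the already-established Lemma \ref{lem:changevar}. Starting from the differential entropy written as an expectation, $H(X) = -\ep \log f_X(X)$, I would take logarithms in the definition \eqref{eq:scoredef} to split the max-score into two additive pieces,
\begin{equation}
\Theta_X(X) = \log f_X(X) - \log F_X(X),
\notag
\end{equation}
and then take expectations of both sides, giving $\ep \Theta_X(X) = \ep \log f_X(X) - \ep \log F_X(X)$.

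The second step is simply to substitute the two known quantities. The first term is $\ep \log f_X(X) = -H(X)$ by the definition of entropy, and the second term is controlled by Lemma \ref{lem:changevar}, which tells us that $\ep \log F_X(X) = -1$. Combining these yields $\ep \Theta_X(X) = -H(X) + 1$, and rearranging gives the desired identity \eqref{eq:keyrep}, namely $H(X) = 1 - \ep \Theta_X(X)$.

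There is essentially no hard combinatorial or analytic core to this argument; the entire content has been offloaded onto Lemma \ref{lem:changevar}, which is exactly the device that converts the awkward term $\ep \log F_X(X)$ into the clean constant $-1$ via the change of variables $y = F_X(x)$. The only point requiring a word of care is integrability: the identity is meaningful precisely when $H(X)$ is finite, and one should note that the term $\ep \log F_X(X)$ is unconditionally well-defined and equal to $-1$ regardless, so the decomposition above is legitimate whenever the entropy exists. I therefore expect the ``obstacle,'' such as it is, to be purely bookkeeping: ensuring the two expectations are separately finite so that the additive split of $\ep \Theta_X(X)$ is valid, rather than any substantive estimate.
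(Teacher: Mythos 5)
Your proposal is correct and is essentially the paper's own argument: both rest on the decomposition $\log f_X(x) = \log F_X(x) + \Theta_X(x)$ together with Lemma \ref{lem:changevar} to evaluate $\ep \log F_X(X) = -1$, differing only in whether the split is performed inside the entropy integral or after taking expectations of $\Theta_X(X)$. Your added remark on integrability is a reasonable piece of bookkeeping that the paper leaves implicit.
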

\begin{proof} The key observation is that
$\log f_X(x) = \log F_X(x) + \Theta_X(x)$ so that:
\begin{eqnarray}
H(X)  & =  & - \int_{-\infty}^{\infty} f_X(x) \log f_X(x) dx  \nonumber  \\
& =  & -\int_{-\infty}^{\infty} f_X(x) \log F_X(x) dx - \int_{-\infty}^{\infty} f_X(x) \Theta_X(x) dx \label{eq:keyrepa} \\
& = &  1 - \ep \Theta_X(X), \nonumber 
\end{eqnarray}
where we apply Lemma \ref{lem:changevar} to find the first term of \eqref{eq:keyrepa}.
\end{proof}

In particular we recover the entropy of $Y$, a Gumbel distribution (see for example \cite[Theorem 1.6iii)]{ravi2}):
\begin{example}  For $Y$, a Gumbel distribution with parameters $\mu$ and $\beta$,  using Example \ref{ex:gumbel}
$$ H(Y) = 1 - \ep \left(  - \log \beta -  \frac{Y - \mu}{\beta} \right) =1 + \log \beta + \gamma,$$
since  $\ep Y = \mu+ \beta \gamma$.
 \end{example}

 We can use similar arguments to give an expression for the relative entropy $D(X \| Y)$ where $Y$ is Gumbel: 

\begin{proposition} Given absolutely continuous random variable $X$, we can write the relative entropy from $X$ to $Y$, a Gumbel random variable with parameters $\mu$ and $\beta$, as
\begin{equation} \label{eq:keyDrep}
D(X \| Y)  =  \left( \ep \Theta_X(X) + \log \beta  + \frac{\ep X-\mu}{\beta} \right) + \left( \ep e^{-(X-\mu)/\beta} - 1 \right),.
\end{equation}
assuming both sides of the expression are finite.
\end{proposition}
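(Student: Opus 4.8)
The plan is to expand the relative entropy directly from its definition and split it into two expectations, each of which I will evaluate using tools already developed in this section. Writing
\begin{equation}
D(X \| Y) = \int_{-\infty}^\infty f_X(x) \log \frac{f_X(x)}{f_Y(x)} \, dx = \ep \log f_X(X) - \ep \log f_Y(X),
\end{equation}
I would treat the two terms separately. For the first term, I would reuse the key observation from the proof of Proposition \ref{prop:entval}, namely $\log f_X(x) = \log F_X(x) + \Theta_X(x)$, and then apply Lemma \ref{lem:changevar} to conclude that $\ep \log f_X(X) = \ep \Theta_X(X) - 1$ (equivalently, this is just $-H(X)$).

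For the second term I would compute the Gumbel log-density explicitly using Example \ref{ex:gumbel}. Since $f_Y(y) = -w'(y) F_Y(y)$ with $w(y) = e^{-(y-\mu)/\beta}$, taking logarithms gives
\begin{equation}
\log f_Y(y) = \log(-w'(y)) + \log F_Y(y) = \Theta_Y(y) - w(y) = -\log \beta - \frac{(y-\mu)}{\beta} - e^{-(y-\mu)/\beta}.
\end{equation}
The crucial point here is that we are evaluating the \emph{Gumbel} log-density at the random variable $X$, not at $Y$, so Lemma \ref{lem:changevar} does not apply to the $\log F_Y$ contribution; instead that term contributes the genuinely non-trivial expectation $\ep \log F_Y(X) = -\ep e^{-(X-\mu)/\beta}$, which is precisely the source of the second bracket in \eqref{eq:keyDrep}. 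Taking the expectation under $f_X$ then yields
\begin{equation}
\ep \log f_Y(X) = -\log \beta - \frac{\ep X - \mu}{\beta} - \ep e^{-(X-\mu)/\beta}.
\end{equation}

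Subtracting the two contributions and collecting terms gives
\begin{equation}
D(X \| Y) = \bigl( \ep \Theta_X(X) - 1 \bigr) - \Bigl( -\log \beta - \tfrac{\ep X - \mu}{\beta} - \ep e^{-(X-\mu)/\beta} \Bigr),
\end{equation}
which rearranges into the two brackets of the claimed identity. I do not anticipate any genuine obstacle in this argument: it is essentially a direct computation. The only point requiring care is sign bookkeeping, together with the observation flagged above that one must not attempt to shortcut the $\ep \log F_Y(X)$ term via Lemma \ref{lem:changevar}, since that lemma is tied to each variable's own distribution function. Implicitly this computation assumes the relevant expectations $\ep \Theta_X(X)$, $\ep X$, and $\ep e^{-(X-\mu)/\beta}$ are finite, which I would note as a standing integrability hypothesis.
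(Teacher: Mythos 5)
Your proposal is correct and follows essentially the same route as the paper: both reduce $\ep \log f_X(X)$ to $\ep \Theta_X(X) - 1$ (i.e.\ $-H(X)$, via Proposition \ref{prop:entval} and Lemma \ref{lem:changevar}), and both evaluate $\ep \log f_Y(X)$ by splitting the Gumbel log-density as $\Theta_Y + \log F_Y$ and substituting the explicit forms from Example \ref{ex:gumbel}. Your flagged caution that Lemma \ref{lem:changevar} must not be applied to $\ep \log F_Y(X)$ (since $X$ is not distributed as $Y$) is exactly the right point of care, and matches how the paper handles that term.
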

\begin{proof}
\begin{eqnarray*}
D( X \| Y ) & = & \int f_X(x) \log \left( \frac{ f_X(x)}{f_Y(x)} \right) dx \\
& = & - H(X) - \int f_X(x) \log f_Y(x) dx \\
& = & - \left( 1 - \ep \Theta_X(X) \right) - \int f_X(x) \log F_Y(x) dx - \int f_X(x) \Theta_Y(x) dx \\
& = &  \ep \Theta_X(X)  -1 + \int f_X(x) \exp \left(- \frac{(x-\mu)}{\beta} \right) dx + \log \beta + \int f_X(x) \frac{(x-\mu)}{\beta} dx.
\end{eqnarray*}
substituting from Proposition \ref{prop:entval} and using the value of $\Theta_Y$ from Example \ref{ex:gumbel}.
\end{proof}

Observe that in the case of $X$ itself Gumbel with the same parameters as $Y$, both bracketed terms in \eqref{eq:keyDrep} vanish: 
\begin{enumerate}
\item
We can rewrite the first term as $\ep \left( \Theta_X(X) - \Theta_Y(X) \right)$, using the value of the max-score in the Gumbel case (Example \ref{ex:gumbel}). This suggests that (as in \cite{johnson14}) we may wish to consider this term as a standardized score function with the relevant linear term subtracted off.
\item We can rewrite the second term in \eqref{eq:keyDrep} as $e^{\mu/\beta} \MM_X(-1/\beta) - 1 $, where $\MM_X(t)$ is the moment generating function. Since (see Example \ref{ex:gumbel}) the moment generating function of a Gumbel random variable is $\MM_Y(t) =  e^{\mu t} \Gamma(1-\beta t )$, we know that in the Gumbel case $e^{\mu/\beta} \MM_X(-1/\beta) - 1
=  e^{\mu/\beta} e^{-\mu/\beta} \Gamma(2) - 1   = 0$.
\end{enumerate}

\section{Expected max-score of the standardized maximum} \label{sec:exmaxscore}

We now consider the behaviour of the expected max-score of the standardized maximum $N_n = (M_n-b_n)/a_n$, using the representation \eqref{eq:vonmisesscore}. We first state a technical lemma which holds for all continuous random variables $X$:

\begin{lemma} \label{lem:technical}
 For $M_n$ the maximum of $n$ independent copies of absolutely continuous random variable $X$:
\begin{enumerate} 
\item   \label{lem:1overn} The expected value
$$ \ep \log F_X(M_n)  = -\frac{1}{n}$$
is the same for all $F_X$.
\item \label{lem:transfer}
The expected value
$$ \ep \log(1-F_X(M_n)) = -H_n$$
is the same for all $F_X$, where we write $H_n := 1 + 1/2 + 1/3 + \ldots + 1/n$ to be the $n$th harmonic number. 
\end{enumerate}
\end{lemma}
\begin{proof}
Part \ref{lem:1overn} is a simple corollary of Lemma \ref{lem:changevar}. Recalling from \eqref{eq:mnpower} that $F_{M_n}(x) = F_X(x)^n$ we know from Lemma \ref{lem:changevar} that
$$ - 1 = \ep \log F_{M_n}(M_n) = n \ep \log F_X(M_n),$$
and the result follows on rearranging.

Part \ref{lem:transfer} requires a slightly more involved calculation. By standard manipulations, we know that $-\log(1-F_X(X))$ is exponential with parameter 1. This follows by direct substitution, since
\begin{eqnarray*}
\pr( -\log(1-F_X(X) \leq z) & = & \pr \left( F_X(X) \leq 1 - \exp(-z) \right) \\
& = &  \pr \left( X \leq F_X^{-1}(1-\exp(-z) \right) = F_X \left( F_X^{-1}(1-\exp(-z) \right) \\
& = & 1- \exp(-z), 
\end{eqnarray*}
as required. Now, since $-\log(1-F_X(t))$ is increasing in $t$, we can write
\begin{eqnarray*}
-\log(1-F_X(M_n))  & = & -\log \left(  1 - F_X \left( \max_{1 \leq i \leq n} X_i \right) \right) 
= \max_{1 \leq i \leq n} -\log(1-F_X(X_i))  \sim \max_{1 \leq i \leq n} E_i,
\end{eqnarray*}
where $E_i$ are independent exponentials with parameter 1.

It is well-known that the expected value of $\max_{1 \leq i \leq n} E_i  =  H_n$, the $n$th harmonic number. The simplest proof of this is  to write
$$ \max_{1 \leq i \leq n} E_i = \sum_{i=1}^n U_i, $$ where $U_i$ are independent exponentials with parameter $n- i+1$.  (This follows from the memoryless property of $E_i$, by thinking of $U_1$ as the time for the first exponential event to happen, $U_2$ as the time for the second, and so on). Since $\ep U_i = 1/(n-i+1)$, the result follows.
\end{proof}

We can put this together to deduce that:
\begin{lemma} \label{lem:firstterm}
 For any absolutely continuous $X$, writing $N_n = (M_n - b_n)/a_n$ for any sequence of norming constants
$a_n$, $b_n$ we deduce that
$$ \ep \Theta_{N_n}(N_n) =  \left( \log a_n + \ep \log h_X(M_n) \right) +  \left( \log n  - H_n \right)  +  \frac{1}{n}.$$
\end{lemma}
\begin{proof} 
Using the representation  \eqref{eq:scoreval2} of the score in  Lemma \ref{lem:scoreval} and the expression \eqref{eq:vonmisesscore} we know that
\begin{eqnarray*}
\ep \Theta_{N_n}(N_n) 
& = & \log(n a_n) + \ep \Theta_X(M_n)   \nonumber \\
& = &  \log(n a_n)  + \ep \log h_X(M_n) +  \ep \log(1-F_X(M_n))- \ep \log F_X(M_n)  \nonumber \\
& = &   \left( \log a_n + \ep \log h_X(M_n) \right) +  \left( \log n  - H_n \right)  +  \frac{1}{n}, 
\end{eqnarray*}
using the two parts of Lemma \ref{lem:technical}.
\end{proof}
Note that only the first bracketed term of Lemma \ref{lem:firstterm} depends on the particular choice of $X$.

\section{von Mises representation and convergence in relative entropy} \label{sec:vonmises}
We will demonstrate convergence of relative entropy in a restricted version of the domain of maximum attraction. In order to work in terms of relative entropy, we need to assume that $X$ is absolutely continuous. Additionally, we recall the definition of a distribution function $F_X$ having a representation of von Mises type \cite[Eq.  (1.5)]{resnick2}:
\begin{definition} \label{def:vonmises}
Assume that the upper limit of the support of $X$ is $x_0 := \sup \{x: F_X(x) < 1 \}$ (which may be finite or infinite)  and
\begin{equation} \label{eq:vonmises}
 F_X(x) =  1- c(x) \exp \left( - \int_{z_0}^x \frac{1}{g(u)} du \right)
= 1- c(x) \exp \left( - G(x) \right), \end{equation}
for some auxiliary function $g$ such that $g'(x) \rightarrow 0$ as $x \rightarrow x_0$, and $\lim_{x \rightarrow x_0} c(x) = c > 0$.
\end{definition}

Assuming the von Mises representation \eqref{eq:vonmises} holds we can write the density $f_X(x) = (c(x) G'(x) - c'(x)) \exp(- G(x))$, or on dividing by $1-F_X(x) = c(x) \exp(-G(x))$ we can deduce that the hazard function satisfies
\begin{equation} \label{eq:vonmisesf}
h_X(x) = \frac{1}{g(x)} - \frac{c'(x)}{c(x)}.
\end{equation}

The canonical choice of norming constants  is given in  \cite[Proposition 1.1(a)]{resnick2} (see also \cite[Table 3.4.4]{embrechts})
as $(a_n,b_n)$ satisfying $1/n = \ol{F}(b_n)$ and $a_n = g(b_n)$.
Note that  (see \cite[Proposition 1.4]{resnick2}) the normalized maximum $N_n = (M_n - b_n)/a_n$ converges in distribution to the Gumbel 
if and only if the representation \eqref{eq:vonmises} holds.  See \cite[Table 3.4.4]{embrechts} for a  list of eight types of distributions whose standardized maximum converges to the Gumbel, some of which we give as examples below:

\begin{example} \label{ex:test} We can illustrate the representation \eqref{eq:vonmises} as follows:
\begin{enumerate}
\item For the exponential we can take $c(x)=1$, $z_0  = 0$, $x_0 = \infty$, $g(u)=1/\lambda$ and $b_n = \log n/\lambda$.
\item For the gamma distribution with shape parameter $\alpha$ and rate parameter $\beta$ we can take $c(x) = 1$, $z_0 = 0$, $x_0 = \infty$ and $g(u) = \Gamma(\alpha, \beta u)/\left(\beta (\beta u)^{\alpha-1} \exp(-\beta u) \right)$, where $\Gamma( \cdot, \cdot)$ is the upper incomplete gamma function. Note that  as $u \rightarrow \infty$ we know that $g(u) \rightarrow 1/\beta$.
\item For the standard Gaussian distribution, we can take $g(x) = (1-\Phi(x))/\phi(x)$ (for $\phi$ and $\Phi$ the standard normal density and distribution functions), and note that the Mills ratio $g(x) \simeq 1/x$ as $x \rightarrow \infty$.
\item For the `Weibull-like' distribution function of \cite[Table 3.4.4]{embrechts} with $\ol{F} \sim K x^\alpha \exp(-c x^\tau)$ (with $\tau > 0$ and $\alpha \in \re$), we can take $G(x)  = c x^\tau - \alpha \log x$, so that $g(x) = x/(\tau c x^\tau - 1)$.
\item For the Benktander-type-II example  of \cite[Table 3.4.4]{embrechts}, with $\ol{F} = x^{\beta-1} \exp(-\alpha(x^\beta-1)/\beta)$, for $\alpha > 0$, $0 < \beta < 1$, we can take  $c(x)= 1$, $z_0=1$ and $g(x) = x/(1-\beta + \alpha x^{\beta})$.
\item For the example of $F_X(x) = 1 - \exp(-x/(1-x))$ given by Gnedenko \cite{gnedenko4} (see also
 \cite[P.39]{resnick2}) we can take
$c(x)=1$, $z_0 = 0$, $x_0 = 1$, $g(u) =  (1-u)^2$ and $b_n = \log n/(1+\log n)$. This is an example of `exponential behaviour at $x_0$' in the sense of \cite[Table 3.4.4]{embrechts}, where we can take $g(x) = (x_0 - x)^2/\alpha$.
\end{enumerate}
\end{example}

We now state a restricted technical condition which we can use to give a simple proof of  convergence in relative entropy:

\begin{condition} \label{cond:main2}
\mbox{ }
\begin{enumerate}
\item \label{it:tail} Assume $\ell(t) := 1 - c'(t) g(t)/c(t) \rightarrow 1$ as $t \rightarrow x_0$.
\item \label{it:tail2} Assume there exists a constant $\sigma < 1$ such that $\log \left( g(x)/x^\sigma \right)$ is bounded and continuous, and that $\gamma := \lim_{x \rightarrow x_0} g(x)/x^\sigma$  is finite and non-zero.
\item  \label{it:moment} Assume that $\int_{-\infty}^0 |x|^k dF_X(x) < \infty$ for all $k$.
\end{enumerate}
\end{condition}

Note that Condition \ref{cond:main2}.\ref{it:tail} holds automatically with equality when  $c(x)$ is constant, which is the restricted version of the von Mises condition stated as \cite[Eq. (1.3)]{resnick2}, and which includes all but the Weibull-like part of  Example \ref{ex:test}. Note that Condition \ref{cond:main2}.\ref{it:tail2} is satisfied for the first five examples in Example \ref{ex:test} (taking $\sigma = 0$ for the exponential and gamma examples, $\sigma=-1$ for the Gaussian, $\sigma = 1-\tau$ for the Weibull-like distribution and $\sigma = 1-\beta$ for the Benktander-type-II distribution). We discuss how the analysis can be adapted in the final (`exponential behaviour at $x_0$') example in Remark \ref{rem:extra} below.

\begin{lemma} \label{lem:transfer2} Under  Condition \ref{cond:main2}.\ref{it:moment}:
\begin{enumerate}
\item The mean $\ep N_n$ converges to the Euler--Mascheroni constant $\gamma$.
\item The moment generating function converges as follows:
\begin{equation} \label{eq:mgfconv}  \lim_{n \rightarrow \infty}
\MM_{N_n}(t)
= \Gamma(1-t),\end{equation}
by Taylor's theorem.
\end{enumerate}
\end{lemma}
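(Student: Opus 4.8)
The target quantities are precisely the first moment and the moment generating function of the standard Gumbel $Y^{*}$ (parameters $\mu=0$, $\beta=1$): by Example~\ref{ex:gumbel} we have $\ep Y^{*}=\gamma$ and $\MM_{Y^{*}}(t)=\Gamma(1-t)$, so the lemma asserts that these two functionals survive the passage to the limit along $N_n$. The natural first step is to record the distributional convergence $N_n\Rightarrow Y^{*}$ that the von Mises representation supplies. Writing $b_n$ for the level with $1-F_X(b_n)=c\,e^{-G(b_n)}=1/n$ and $a_n=g(b_n)$, and setting $\phi_n(z):=G(a_nz+b_n)-G(b_n)$, the change of variable $u=b_n+a_n s$ turns $\phi_n(z)$ into $\int_0^z g(b_n)/g(b_n+a_n s)\,ds$, which tends to $z$ since $g'\to0$ near $x_0$ forces $g(b_n+a_n s)/g(b_n)\to1$. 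Hence $1-F_X(a_nz+b_n)=e^{-\phi_n(z)}/n\sim e^{-z}/n$, and by \eqref{eq:mnpower} we get $F_{N_n}(z)=(1-e^{-\phi_n(z)}/n)^{n}\to\exp(-e^{-z})$, the last step being exactly Taylor's theorem applied to $\log(1-x)$.

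For the moment generating function I would work directly with the density. Combining \eqref{eq:mnpower} and \eqref{eq:vonmisesf} gives $f_{N_n}(z)=(1-e^{-\phi_n(z)}/n)^{n-1}e^{-\phi_n(z)}\phi_n'(z)$, where $\phi_n(z)\to z$ and $\phi_n'(z)=g(b_n)/g(b_n+a_nz)\to1$, so the integrand of $\MM_{N_n}(t)=\int e^{tz}f_{N_n}(z)\,dz$ converges pointwise to $e^{tz}e^{-z}\exp(-e^{-z})=e^{tz}f_{Y^{*}}(z)$, whose integral is $\Gamma(1-t)$. The whole task is then to justify taking the limit inside the integral by dominated convergence. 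On the upper tail $z>0$, the elementary bound $1-(1-x)^n\le nx$ gives $1-F_{N_n}(z)\le e^{-\phi_n(z)}$, and with $\log g$ bounded (Condition~\ref{cond:main2}.\ref{it:tail}) one has $\phi_n(z)\ge\theta z$ and $\phi_n'(z)\le1/\theta$ for $\theta=\inf g/\sup g$, producing the uniform dominator $f_{N_n}(z)\le\theta^{-1}e^{-\theta z}$ that controls $e^{tz}f_{N_n}(z)$ on a neighbourhood of the origin. On the lower tail $z<0$ I would pass back to $m=a_nz+b_n$, so that the contribution is governed by integrals of the shape $\tfrac1{a_n}\int_{-\infty}^{b_n}F_X(m)^{n}\,dm$; here Condition~\ref{cond:main2}.\ref{it:moment} is what makes these finite and uniformly small, since finiteness of all negative moments forces $F_X(m)$ to decay faster than any polynomial, dominating the remaining factors.

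The mean then follows from the generating function by Taylor's theorem: writing $\MM_{N_n}(t)=1+t\,\ep N_n+o(t)$ and $\Gamma(1-t)=1+\gamma t+o(t)$, where $\tfrac{d}{dt}\Gamma(1-t)\big|_{t=0}=-\Gamma'(1)=\gamma$, convergence of the generating functions on a neighbourhood of $0$ forces convergence of the first Taylor coefficients, so $\ep N_n\to\gamma$. Equivalently one may argue directly that $\{N_n\}$ is uniformly integrable, the positive part from the Gumbel-type upper tail and the negative part from $\int_0^\infty F_X(b_n-a_nw)^n\,dw=\tfrac1{a_n}\int_{-\infty}^{b_n}F_X(m)^n\,dm$ being controlled by Condition~\ref{cond:main2}.\ref{it:moment}, and then combine this with $N_n\Rightarrow Y^{*}$.

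The main obstacle is the lower-tail domination of the previous paragraph. The upper tail and the pointwise limit are routine consequences of the von Mises structure, but bounding $\int_{-\infty}^{b_n}F_X(m)^n\,dm$ (and, for the exponentially weighted analogue arising at negative $t$, the even more delicate version) uniformly in $n$ is exactly the point at which Condition~\ref{cond:main2}.\ref{it:moment} must be invoked, and is the only place the hypothesis is needed. Extra care is required because $b_n\to x_0$ drifts the troublesome region out to the boundary of the support, so the estimate has to be made uniform as this drift occurs.
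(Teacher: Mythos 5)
Your proposal takes a genuinely different route from the paper, and it stalls at exactly the point you yourself flag as ``the main obstacle''. The paper's proof is essentially a citation: under Condition \ref{cond:main2}.\ref{it:moment} it invokes \cite[Proposition 2.1]{resnick2} to get the moment convergence \eqref{eq:momconv} (the case $k=1$ is part 1 of the lemma), and then obtains \eqref{eq:mgfconv} by expanding $\MM_{N_n}(t)=\sum_r t^r\ep(N_n)^r/r!$ and passing to the limit term by term, recognising the Taylor series of $\Gamma(1-t)$. You instead attempt a self-contained argument: weak convergence $N_n\Rightarrow Y^{*}$ from the von Mises representation, pointwise convergence of $e^{tz}f_{N_n}(z)$, and dominated convergence. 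Your computations of $\phi_n$, of $f_{N_n}$, and of the upper-tail dominator $\theta^{-1}e^{-\theta z}$ are correct (though note they use Condition \ref{cond:main2}.\ref{it:tail}, which is not among the lemma's stated hypotheses, and they only cover $t<\theta$). But the lower-tail domination, which you defer, is not merely delicate: as you have set it up, it cannot be done.

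The value of $t$ the paper actually needs is $t=-1$ (Theorem \ref{thm:main} uses $\MM_{N_n}(-1)\to\Gamma(2)$), and for negative $t$ the weight $e^{tz}$ grows exponentially as $z\to-\infty$, i.e.\ like $e^{|t|(b_n-m)/a_n}$ in the original variable, whereas Condition \ref{cond:main2}.\ref{it:moment} yields only polynomial-type decay, $F_X(m)\le C_k|m|^{-k}$ for $m<0$ by Markov's inequality. Polynomial --- indeed any sub-exponential --- decay cannot dominate an exponentially growing weight, so the mechanism you propose (``finiteness of all negative moments forces $F_X(m)$ to decay faster than any polynomial, dominating the remaining factors'') fails for $t<0$. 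Concretely: keep a nice right tail, say $F_X(x)=1-\tfrac12 e^{-x}$ for $x\ge 0$ (so the von Mises representation and Condition \ref{cond:main2}.\ref{it:tail} hold with $g\equiv 1$), and set $F_X(x)=\tfrac12 e^{-\sqrt{|x|}}$ for $x<0$. All negative moments are finite, yet for each fixed $n$ the relevant integrand $e^{(b_n-m)/a_n}\,nF_X(m)^{n-1}f_X(m)$ behaves like $e^{|m|-n\sqrt{|m|}}/\sqrt{|m|}$ as $m\to-\infty$, so $\MM_{N_n}(-1)=+\infty$ for every $n$ and no dominating function can exist. Your approach therefore cannot be completed under the stated hypotheses; it would need a genuinely stronger left-tail assumption (e.g.\ that the representation holds on the whole support, or an exponential-moment condition on $\min(X,0)$), or else a retreat to the paper's route of citing moment convergence and summing the series. (Your difficulty does expose something real: the paper's own interchange of limit and infinite sum, justified only by the phrase ``by Taylor's theorem'', is vulnerable to the same left-tail construction; but the paper's part 1 and \eqref{eq:momconv} rest on a citable result, whereas in your argument the unresolved step sits on the critical path.)
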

\begin{proof}

Note that (see \cite[Proposition 2.1]{resnick2}) under Condition \ref{cond:main2}.\ref{it:moment} the $k$th moment of $N_n$ converges:
\begin{equation} \label{eq:momconv} \lim_{n \rightarrow \infty} \ep (N_n)^k = (-1)^k \Gamma^{(k)}(1),\end{equation}
where $\Gamma^{(k)}(x)$ is the $k$th derivative of the $\Gamma$ function at $x$.

We deduce convergence of the moment generating function
\begin{eqnarray*}  \lim_{n \rightarrow \infty}
\MM_{N_n}(t) =  \lim_{n \rightarrow \infty} \sum_{r=0}^\infty \frac{t^r \ep(N_n)^r}{r!} =  \lim_{n \rightarrow \infty} \sum_{r=0}^\infty \frac{t^r \ep(N_n)^r}{r!} = \sum_{r=0}^\infty \frac{(-t)^r \Gamma^{(k)}(1)}{r!}
= \Gamma(1-t),\end{eqnarray*}
by Taylor's theorem.
\end{proof}

\begin{theorem} \label{thm:main} If the distribution function of $X$ satisfies the
von Mises representation \eqref{eq:vonmises} with $x_0 = \infty$ and Condition \ref{cond:main2} holds, there exist norming constants $a_n$ and $b_n$ satisfying $1/n = \ol{F}(b_n)$ and $a_n = g(b_n)$ such that $N_n = (M_n - b_n)/a_n$ satisfies
$$ \lim_{n \rightarrow \infty} D(  N_n \| Y) = 0,$$
where $Y$ is a standard Gumbel distribution (with $\beta = 1$ and $\mu = 0$). \end{theorem}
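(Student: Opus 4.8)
The plan is to start from the relative-entropy expression \eqref{eq:keyDrep} applied to $X = N_n$ and $Y$ standard Gumbel ($\mu = 0$, $\beta = 1$), which reads
\[
D(N_n \| Y) = \left(\ep\Theta_{N_n}(N_n) + \ep N_n\right) + \left(\ep e^{-N_n} - 1\right),
\]
and to show that each of the two bracketed terms tends to zero. The second bracket is the easier one: since $\ep e^{-N_n} = \MM_{N_n}(-1)$, Lemma \ref{lem:transfer2} gives $\MM_{N_n}(-1) \to \Gamma(2) = 1$, so this bracket vanishes in the limit.

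For the first bracket I would use $\ep N_n \to \gamma$ (again Lemma \ref{lem:transfer2}) and aim to prove the complementary statement $\ep\Theta_{N_n}(N_n) \to -\gamma$, so that the two contributions cancel. By \eqref{eq:scoreval2} we have $\ep\Theta_{N_n}(N_n) = \log(n a_n) + \ep\Theta_X(M_n)$, and the von Mises density \eqref{eq:vonmisesf} lets me split the max-score of $X$ as
\[
\Theta_X(x) = \log(1 - F_X(x)) - \log g(x) - \log F_X(x).
\]
Evaluating each piece at $M_n$ in expectation: Corollary \ref{cor:1overn} gives $\ep\log F_X(M_n) = -1/n$; and since $F_X(M_n)$ has density $n u^{n-1}$ on $[0,1]$ (i.e.\ is $\mathrm{Beta}(n,1)$), a direct computation gives $\ep\log(1 - F_X(M_n)) = -H_n$, the $n$th harmonic number. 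Assembling these, and using the chosen constants $a_n = g(b_n)$ so that $\log(n a_n) = \log n + \log g(b_n)$, I obtain
\[
\ep\Theta_{N_n}(N_n) = (\log n - H_n) + \bigl(\log g(b_n) - \ep\log g(M_n)\bigr) + \frac{1}{n}.
\]
Here $\log n - H_n \to -\gamma$ supplies exactly the needed constant, while $1/n \to 0$. The remaining term is where Condition \ref{cond:main2}.\ref{it:tail} does its work: since $M_n \uparrow x_0$ almost surely and $b_n \to x_0$ (because $G(b_n) = \log(cn) \to \infty$), the continuity and boundedness of $\log g$ give both $\ep\log g(M_n) \to \log g(x_0)$ (by bounded convergence) and $\log g(b_n) \to \log g(x_0)$, so their difference vanishes. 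Hence $\ep\Theta_{N_n}(N_n) \to -\gamma$, the first bracket tends to $0$, and $D(N_n\|Y)\to 0$ follows.

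The main obstacle I anticipate is the control of the auxiliary-function term $\log g(b_n) - \ep\log g(M_n)$: one must justify passing to the limit inside the expectation of $\log g(M_n)$, which is precisely why boundedness of $\log g$ (not merely finiteness of $g(x_0)$) is imposed in Condition \ref{cond:main2}.\ref{it:tail}, and one must verify $b_n \to x_0$ from the definition of the norming constants. A secondary concern is keeping all the expectations in \eqref{eq:keyDrep} finite throughout — the moment condition \ref{cond:main2}.\ref{it:moment} underpins the convergence of $\ep N_n$ and $\MM_{N_n}(-1)$ via Lemma \ref{lem:transfer2} — but the genuinely load-bearing computation is the identity $\ep\log(1-F_X(M_n)) = -H_n$, whose $\log n + \gamma$ asymptotics produce the cancellation that forces the Gumbel limit.
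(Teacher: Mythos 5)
Your proposal is correct and follows essentially the same route as the paper: the same splitting of \eqref{eq:keyDrep} into two brackets, the same von Mises decomposition of the max-score via \eqref{eq:vonmisesf}, the same use of Corollary \ref{cor:1overn} and Lemma \ref{lem:transfer2}, and the same four-term limit analysis yielding the $\gamma$ cancellation. The only (harmless) variations are that you establish $\ep \log(1-F_X(M_n)) = -H_n$ by a direct Beta$(n,1)$ integral rather than the paper's exponential-maximum argument (Lemma \ref{lem:transfer}), and you justify $\ep \log g(M_n) \to \log g(x_0)$ via almost-sure convergence plus bounded convergence instead of convergence in distribution plus the portmanteau lemma.
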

\begin{proof}
We use the norming constants from \cite[Proposition 1.1(a)]{resnick2} (see also \cite[Table 3.4.4]{embrechts})
We can write the first term in the relative entropy expression \eqref{eq:keyDrep} in the case $\mu = 0$ and $\beta = 1$ using Lemma \ref{lem:firstterm} as
\begin{eqnarray}
\lefteqn{
\ep \Theta_{N_n}(N_n) + \ep N_n   } \nonumber \\
& = &   \left( \log a_n + \ep \log h_X(M_n) \right) +  \left( \log n  - H_n \right)  +  \frac{1}{n} +   \ep N_n. \label{eq:firstterm}
\end{eqnarray}
We can consider the behaviour as $n \rightarrow \infty$ of the four terms in \eqref{eq:firstterm} separately:
\begin{enumerate}
\item  We can write the first term in \eqref{eq:firstterm} in terms of the $\sigma$ of Condition \ref{cond:main2}.\ref{it:tail2} as
\begin{eqnarray}  \lefteqn{  \log a_n + \ep \log h_X(M_n)  } \nonumber \\ 
& = & \log g(b_n) +  \ep \log h_X(M_n) \nonumber \\  
& = & 
\log \left( \frac{g(b_n)}{b_n^\sigma} \right) - \ep \log \left( \frac{g(M_n)}{M_n^\sigma} \right)  - \sigma \ep \log \left( \frac{M_n}{b_n} \right) 
+ \ep \log \left( g(M_n) h_X(M_n) \right).\;\; \label{eq:firstterm1a}
\end{eqnarray}
\begin{enumerate}
\item Since $b_n \rightarrow x_0$ and $M_n \rightarrow x_0$ in distribution, we know that the first two terms of \eqref{eq:firstterm1a} tend to 
$\log \gamma - \log \gamma = 0$, using the portmanteau lemma. 
\item We can control the third term of \eqref{eq:firstterm1a} by writing $M_n = b_n + a_n N_n$ (and recalling that $a_n = g(b_n)$) to obtain
\begin{eqnarray*}
\ep \log \left( \frac{M_n}{b_n} \right)  & = & \ep \log \left( 1 + \frac{a_n N_n}{b_n} \right) =
 \sum_{k=1}^\infty \frac{(-1)^k}{k}  \left( \frac{a_n}{b_n} 
\right)^k \ep N_n^k. 
\end{eqnarray*}
and we can use the facts that $a_n/b_n = g(b_n)/b_n \sim \gamma b_n^{\sigma -1} \rightarrow 0$ and (by \eqref{eq:momconv})  that $\ep N_n^k$ converges to a finite constant to deduce that this term tends to zero.
\item 
Using  the representation of the hazard function \eqref{eq:vonmisesf} we know that the fourth term of \eqref{eq:firstterm1a}
equals $ \ep \log \left( 1 - c'(M_n) g(M_n)/c(M_n) \right) = \ep \log \ell(M_n)$, so 
since $M_n \rightarrow x_0$ in distribution we know that $\ep \log \left( g(M_n) h_X(M_n) \right) \rightarrow \log \ell(x_0) = 0$, 
by the portmanteau lemma.
\end{enumerate}
Hence overall, the first term of \eqref{eq:firstterm} tends to zero.
\item It is a standard result that $\log n - H_n$ is a monotonically increasing sequence which converges to $-\gamma$.
\item Clearly the third term  in \eqref{eq:firstterm} converges to zero.
\item  Lemma \ref{lem:transfer2}  tells us that the final term converges to $\gamma$.
\end{enumerate}
Putting this all together, we deduce that \eqref{eq:firstterm} converges to
$$  0   - \gamma + 0 + \gamma = 0.$$ 

In the case $\mu = 0$ and $\beta = 1$, the second term in the relative entropy expression \eqref{eq:keyDrep} becomes
$$ \ep e^{-N_n} - 1 = \MM_{N_n}(-1) - 1 \rightarrow \Gamma(2) - 1 = 0,$$
by \eqref{eq:mgfconv}.
\end{proof}

\begin{corollary} \label{cor:ent}
Assume that the distribution function $F_X$ has a von Mises representation \eqref{eq:vonmises} whose auxiliary function $g$ satisfies Condition \ref{cond:main2}. Then the entropy of the normalized maximum $N_n = (M_n - b_n)/a_n$ satisfies
$$ \lim_{n \rightarrow \infty} H(N_n) = 1+ \gamma,$$
which is the entropy of the corresponding Gumbel distribution.
\end{corollary}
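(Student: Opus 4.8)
The plan is to read the entropy of $N_n$ directly off the representation \eqref{eq:keyrep}. Applying Proposition \ref{prop:entval} to the random variable $N_n$ gives $H(N_n) = 1 - \ep \Theta_{N_n}(N_n)$, so the entire corollary reduces to identifying the limit of $\ep \Theta_{N_n}(N_n)$ as $n \to \infty$. This is attractive because no fresh density computation is needed: the single scalar quantity $\ep \Theta_{N_n}(N_n)$ controls the whole entropy.

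I would then observe that this limit has essentially already been computed inside the proof of Theorem \ref{thm:main}. The chain of equalities \eqref{eq:firstterm} expresses $\ep \Theta_{N_n}(N_n) + \ep N_n$ as a sum of four grouped terms, the last of which is $\ep N_n$ itself; cancelling it leaves
$$\ep \Theta_{N_n}(N_n) = \left( \log g(b_n) - \ep \log g(M_n) \right) + \left( \log n - H_n \right) + \frac{1}{n}.$$
The analysis already carried out for these three terms — the portmanteau/continuity argument under Condition \ref{cond:main2}.\ref{it:tail} for the first, the standard fact $\log n - H_n \to -\gamma$ for the second, and the trivial bound for the third — shows that they converge to $0$, $-\gamma$ and $0$ respectively. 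Hence $\ep \Theta_{N_n}(N_n) \to -\gamma$, and therefore $H(N_n) = 1 - \ep \Theta_{N_n}(N_n) \to 1 + \gamma$, matching the Gumbel entropy $1 + \log \beta + \gamma$ in the case $\beta = 1$.

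In truth there is no genuinely new obstacle: the corollary is a bookkeeping consequence of the theorem. The one conceptual point worth flagging is the role of the mean term $\ep N_n$. The bracket $\ep\Theta_{N_n}(N_n) + \ep N_n$ is precisely the first summand of the relative entropy \eqref{eq:keyDrep}, and Theorem \ref{thm:main} shows it vanishes in the limit; the mean term is exactly what converts this cancellation into the additive $\gamma$ shift appearing in the entropy. If one prefers to avoid reusing the decomposition \eqref{eq:firstterm} directly, an equivalent route is to combine the theorem's conclusion $\ep \Theta_{N_n}(N_n) + \ep N_n \to 0$ with $\ep N_n \to \gamma$ from Lemma \ref{lem:transfer2} and subtract, again giving $\ep \Theta_{N_n}(N_n) \to -\gamma$. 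Either way, the only thing to confirm is that the relevant convergences hold term by term, which is how they were established in the proof of the theorem.
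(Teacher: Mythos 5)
Your proposal is correct and is essentially the paper's own proof: the paper applies Proposition \ref{prop:entval}, writes $H(N_n) = \left(1+\ep N_n\right) - \ep\left(\Theta_{N_n}(N_n)+N_n\right)$, and cites Lemma \ref{lem:transfer2} for the first bracket and the analysis of \eqref{eq:firstterm} from Theorem \ref{thm:main} for the second --- which is precisely the ``equivalent route'' of your final paragraph. Your primary variant (cancelling $\ep N_n$ in \eqref{eq:firstterm} so that $\ep \Theta_{N_n}(N_n) \to -\gamma$ follows from the portmanteau argument and the harmonic-number asymptotics alone) is the same computation rearranged, with the minor bonus that it never invokes Lemma \ref{lem:transfer2}.
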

\begin{proof} By Proposition \ref{prop:entval}
\begin{eqnarray*}
 H(N_ n) & = & 1 - \ep \Theta_{N_n}(N_n) \\
& = & \left( 1 + \ep N_n \right)  - \ep \left( \Theta_{N_n}(N_n) + N_n \right)
\end{eqnarray*}
The first term converges to $1+\gamma$ by Lemma \ref{lem:transfer2}, the second term is precisely \eqref{eq:firstterm} and converges to zero as described in the proof of Theorem \ref{thm:main}.
\end{proof}

\begin{remark}
Note that for the exponential case of Example \ref{ex:test}, Condition \ref{cond:main2} is satisfied, so we can deduce convergence in relative entropy. Indeed, since $g$ is constant in this case, the first term of \eqref{eq:firstterm} vanishes, meaning that we can
deduce that $\ep \Theta_{N_n}(N_n) = \log n - H_n + 1/n$ and the entropy is exactly
\begin{equation} \label{eq:entexp} H(N_n) = 1 + H_n - \log n - 1/n, \end{equation}
which value may be of independent interest. Using the fact that $H_n = \log  n + \gamma + 1/(2n) + O(1/n^2)$, we can deduce that
$H(N_n) = 1 + \gamma - 1/(2n) + O(1/n^2)$. In the spirit of \cite{johnson5} and other papers, it may be of interest to ask under what conditions the convergence in Corollary \ref{cor:ent} is at rate $O(1/n)$ in this way. \end{remark}

Theorem \ref{thm:main} shows that convergence in relative entropy occurs for a range of random variables that are `well behaved' in some sense. However, observe that the Gnedenko example $g(u) =  (1-u)^2$ from Example \ref{ex:test} does not satisfy Condition \ref{cond:main2}.\ref{it:tail2}, so Theorem \ref{thm:main} cannot be directly applied in this case. However, it is possible to deduce convergence in relative entropy in this example too, using a relatively simple adaption of the argument to a class of random variables with finite $x_0$ such that the following replacement for Condition \ref{cond:main2}.\ref{it:tail2} holds:

\begin{condition} \label{cond:main2a}
 Assume there exists a constant $\sigma > 1$ such that $\log \left( g(x)/(x_0-x)^\sigma \right)$ is bounded and continuous, and that $\gamma := \lim_{x \rightarrow x_0} g(x)/(x_0-x)^\sigma$  is finite and non-zero.
\end{condition}

\begin{remark} \label{rem:extra}
The only place where we need to adapt the proof of Theorem \ref{thm:main} is in the decomposition of the first term in \eqref{eq:firstterm}, where we can instead use the decomposition:
$$
\log \left( \frac{g(b_n)}{(x_0 - b_n)^\sigma} \right) - \ep \log \left( \frac{g(M_n)}{(x_0 - M_n)^\sigma} \right)  - \sigma \ep \log \left( \frac{x_0 - M_n}{x_0 - b_n} \right) 
$$
As before, the first two terms tend  to $\log \gamma - \log \gamma = 0$ by the portmanteau lemma. We can use a similar Taylor expansion  
$$ \ep \log \left( \frac{x_0 - M_n}{x_0 - b_n} \right) = - \sum_{k=1}^\infty \frac{1}{k} \left( \frac{a_n }{x_0 - b_n} \right)^k \ep N_n^k,$$
and deduce convergence in relative entropy using the fact that $a_n/(x_0 - b_n) = g(b_n)/(x_0-b_n) \simeq \gamma (x_0-b_n)^{1-\sigma} \rightarrow 0$.
\end{remark}

We have shown that there is a natural information-theoretic interpretation of convergence in relative entropy of the standardized maximum to the Gumbel distribution, and provided simple conditions under which this occurs. It would be of interest to provide a similar analysis for the other extreme value distributions -- the Fr\'{e}chet (Type II) and Weibull (Type III) distributions -- which remains an interesting problem for future work, as does the question of the optimal rate of convergence in relative entropy.


\end{document}